\newcommand{\R}{\mathbb{R}}
\newcommand{\rn}{\mathbb{R}^N}
\newcommand{\hau}{Hausdorff }
\newcommand{\aaa}{{\textrm{\sf{a}}}}
\newcommand{\bd}{\partial}
\newcommand{\dive}{\text{\sf div}}
\newcommand{\conv}{\text{\sf conv}}
\newcommand{\Deltap}{\Delta_{\p}}
\newcommand{\ep}{\varepsilon}
\newcommand{\F}{\mathscr{F}}
\newcommand{\fm}{\mathscr{F}^-}
\newcommand{\Ha}{\mathscr{H}}
\newcommand{\la}{\lambda}
\newcommand{\La}{\Lambda}
\newcommand{\Lab}[1]{{\boldsymbol\Lambda_{#1}}}
\newcommand{\mink}{Minkowski }
\newcommand{\omk}{\oo\setminus\overline K}
\newcommand{\oo}{\Omega}
\newcommand{\ot}[1]{\Omega(#1)}
\newcommand{\ou}{K}
\newcommand{\p}{\text{p}}
\newcommand{\qm}{\textrm{Q}^2_-}
\newcommand{\ua}{u_0}
\newcommand{\ub}{u_1}
\newcommand{\ula}{u_{\la}}
\newcommand{\urj}{u_{r_j}}
\newcommand{\xx}{\bar{x}}
\numberwithin{equation}{section}
\newtheorem{theorem}{Theorem}[section]
\newtheorem{proposition}[theorem]{Proposition}
\newtheorem{lemma}[theorem]{Lemma}
\newtheorem{remark}[theorem]{Remark}
\begin{document}
\title[Bernoulli problem]{A Bernoulli problem with non constant gradient boundary constraint}

\author[C. Bianchini]{Chiara Bianchini}

\address{C. Bianchini, Institut Elie Cartan, Universit\'e Henri Poincar\'e Nancy, Boulevard des Aiguillettes B.P. 70239, F-54506 Vandoeuvre-les-Nancy Cedex, France}
\email{cbianchini@math.unifi.it}


\date{}

\keywords{Bernoulli problem, convexity}
\subjclass{35R35, 35J66, 35J70.}

\begin{abstract}
We present in this paper a result about existence and convexity of solutions to a free boundary problem of Bernoulli  type, with non constant gradient boundary constraint depending on the outer unit normal.
In particular we prove that, in the convex case, the existence of a subsolution guarantees the existence of a classical solution, which is proved to be convex.
\end{abstract}

\maketitle

\section{Introduction}
Consider an annular condenser with a constant potential difference equal to one, such that one of the two plates is given and the other one has to be determined in such a way that the intensity of the electrostatic field is constant on it.
If $\omk$ represents the condenser, whose plates are $\oo$ and $K$ (with $\overline K\subseteq\oo$), and $u$ is the electrostatic potential, it holds $\Delta u=0$ in $\omk$ and $|Du|= constant$ on either $\bd\oo$ or $\bd K$, depending on which of them represents the unknown plate. 

This gives rise to the classical Bernoulli problems (interior and exterior), where the involved differential operator is the Laplacian $\Delta$, which expresses the linearity of the electrical conduction law.
However, some physical situations can be better modeled by general power flow laws, then yielding to the $\p$-Laplacian as governing operator. 
Moreover, one can consider the possibility to have a non constant prescribed intensity of the electric field on the free
boundary.
In particular, as the intensity of the electrostatic field $\overrightarrow{E}$ on an equipotential surface is related to its outer unit normal vector, through the curvature of that surface, one can assume $|\overrightarrow{E}|$ to depend on the outer unit normal vector $\nu(x)$ of the unknown boundary. 
In view of these considerations, we deal here with the following problem.

Given a domain in $\oo\subseteq\rn$, a real number $\p>1$ and a smooth function $g:S^{N-1}\to \R$ such that 
\begin{equation}\label{gcC}
c\le g(\textsf{v})\le C\quad\text{ for every }\textsf{v}\in S^{N-1},
\end{equation}
for some $C>c>0$, find a function $u$ and a domain $K$, contained in $\oo$, such that
\begin{equation}\label{Bint}
\begin{cases}
\Deltap u(x)=0 \quad &\text{in }\oo\setminus\overline\ou,\\
u=0 \quad &\text{on }\bd\oo,\\
u=1, \quad &\text{on }\bd\ou,\\
|Du(x)|=g(\nu(x)), \quad &\text{on }\bd\ou,
\end{cases}
\end{equation}
where $\nu(x)=\nu_K(x)$ is the outer unit normal to $\bd K$ at $x\in\bd K$.

Here an later $\Deltap$ is the $\p$-Laplace operator for $\p>1$, that is
$$
\Deltap u=\dive(|Du|^{\p-2}Du)\,.
$$

If $u$ is a solution to (\ref{Bint}) we will tacitly continue $u$ by $1$ in $K$ throughout the paper, so that a solution $u$ to (\ref{Bint}) is defined, and continuous, in the whole $\oo$.

The boundary condition $|Du|=\tau$ has to be understood in a classical way:
$$
\lim_{\substack{y\to x \\ y\in \omk}}|Du(y)|=|Du(x)|.
$$
Moreover, in the convex case, that is when $\oo$ is a convex set, we are allowed to consider classical solutions (justified by \cite{L}, since $K$ inherits the convexity of $\oo$, as shown later).

Notice that, given $ K$ in (\ref{Bint}), the function $u$ is uniquely determined since it represents the capacitary potential of $\omk$; on the other hand, given the function $u$ the free boundary $\bd K$ is determined as $\bd K=\bd\{x\in\rn\ :\ u(x)\ge 1\}$.
Hence, we will speak of {\em a solution} to (\ref{Bint}) referring indifferently to the sets $K$ or to the corresponding potential function $u$ (or to both) and we will indicate the class of solutions as $\F(\oo,g)$, where $\oo$ is the given domain and $g$ is the gradient boundary datum.

The original interior Bernoulli problem corresponds to the case $\p=2$, that is the Laplace operator, with constant gradient boundary constraint $g(\nu(x))\equiv \tau$.
In general, given a domain $\oo\subseteq \rn$, and $\tau>0$, the classical interior Bernoulli problem consists in finding a domain $K$, with $\overline K \subseteq \oo$ and a function $u$ such that
\begin{equation}\label{Bintconstant}
\begin{cases}
\Deltap u(x)=0 \quad &\text{in }\oo\setminus\overline\ou,\\
u=0 \quad &\text{on }\bd\oo,\\
u=1, \quad &\text{on }\bd\ou,\\
|Du|=\tau, \quad &\text{on }\bd\ou.
\end{cases}
\end{equation}
Easy examples show that Problem (\ref{Bintconstant}), and hence Problem (\ref{Bint}), need not have a solution for every given domain $\oo$ and for every positive constant $\tau$. 
Many authors consider the classical problem, both from the side of the existence and geometric properties of the solution. 
In particular we recall the pioneering work of Beurling \cite{B} and several other contributions as  \cite{A2}, \cite{AC}, \cite{F}, \cite{FR}, \cite{La}. 
The treatment of the nonlinear case is more recent and mainly due to Henrot and Shahgholian (see for instance \cite{HS1}, \cite{HS2}; see also \cite{AM},\cite{BS}, \cite{GK}, \cite{MPS} and references therein).
The uniqueness problem has been solved later in \cite{CT} for $\p=2$ and \cite{BS} for $\p>1$.
Here we summarize some of the known results.
%
%

\begin{equation}\label{Bresume}
  \begin{minipage}{0.85\linewidth}
Let $\oo\subseteq\rn$ be a convex $C^1$ bounded domain.
There exists a positive constant $\Lab{\p}=\Lab{\p}(\oo)$, named \emph{Bernoulli constant}, such that Problem (\ref{Bintconstant}) has a solution if and only if $\tau\ge\Lab{\p}$; in such a case there is at least one which is $C^{2,\alpha}$ and convex.
In particular for $\tau=\Lab{\p}$ the solution is unique.
  \end{minipage}
\end{equation}

In this paper we consider Problem (\ref{Bint}) in the convex case, that is when the given domain is a convex set, and we prove that the convexity is inherited by the unknown domain without making additional assumptions on the function $g$.
More precisely, let us indicate by $\fm(\oo,g(\nu))$ the class of the so called \emph{subsolution} to Problem (\ref{Bint}); essentially, $v$ and $K$ are subsolutions if $v$ solves
$$
\begin{cases}
\Deltap v\ge 0       &\qquad\text{ in }\omk\\
v=0                  &\qquad\text{ on }\bd\oo\\
v=1,\ |Dv(x)|\le g(\nu(x))   &\qquad\text{ on }\bd K\,;
\end{cases}
$$
(see Section \ref{secsubsol} for more details).

Our main theorem is the following.
\begin{theorem}\label{gsub-sol}
Let $\oo\subseteq\rn$ be a convex $C^1$ domain, and $g:S^{N-1}\to\R$ be a continuous function such that (\ref{gcC}) holds.
If $\fm(\oo,g(\nu))$ is non empty, then there exists a $C^1$ convex domain $K$ with $\overline K\subseteq\oo$ such that the $\p$-capacitary potential $u$ of $\omk$ is a classical solution to the interior Bernoulli problem (\ref{Bint}).
\end{theorem}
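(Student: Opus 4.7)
The plan is to construct the solution as the maximal element of the class of convex subsolutions of (\ref{Bint}), in the spirit of the Perron type arguments used for the classical Bernoulli problem. First, starting from the assumption $\fm(\oo,g(\nu))\neq\emptyset$, I would restrict attention to convex members: given any $K\in\fm(\oo,g(\nu))$ its closed convex hull $\conv K$ is still contained in $\oo$, and using the comparison principle for $\Deltap$ together with Hopf's lemma on the regular portion of $\bd(\conv K)$ one can verify that $\conv K$ is again a subsolution (the gradient of its $\p$-capacitary potential can only decrease, while the outer normal changes continuously). Ordering the convex members of $\fm(\oo,g(\nu))$ by inclusion, I would then set
$$
K^*=\overline{\bigcup\bigl\{K:\, K\in\fm(\oo,g(\nu)),\ K\text{ convex}\bigr\}}\subseteq\oo,
$$
which is convex by construction. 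Writing $K^*$ as the Hausdorff limit of an increasing sequence $K_n$ of convex subsolutions, the potentials $u_n$ of $\oo\setminus\overline{K_n}$ converge monotonically to the $\p$-capacitary potential $u^*$ of $\omk^*$, and the continuity of $g$ together with the uniform $C^1$ control on $\bd K_n$ coming from convexity allows one to pass to the limit in $|Du_n|\le g(\nu_n)$. Hence $K^*$ is itself a convex subsolution, and maximal by construction.

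The heart of the proof is to upgrade $K^*$ from a subsolution to a classical solution, i.e.\ to show that $|Du^*|=g(\nu)$ on $\bd K^*$. I would argue by contradiction: if there were a relatively open portion of $\bd K^*$ on which $|Du^*|<g(\nu)$ strictly, then I would construct a strictly larger convex set $\tilde K\supsetneq K^*$, still contained in $\oo$, whose $\p$-capacitary potential $\tilde u$ still satisfies $|D\tilde u|\le g(\tilde\nu)$ on $\bd\tilde K$. A natural candidate is a local outward convex perturbation of $K^*$ obtained by taking the convex hull of $K^*$ with a small ball located on the portion of $\bd K^*$ where the gradient has slack, scaled so as to preserve containment in $\oo$. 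Continuity of both $|Du|$ and $\nu$ in the Hausdorff topology of convex sets, combined with the strict subsolution inequality $|Du^*|<g(\nu)$ in the region where the perturbation takes place, would ensure that the bound $|D\tilde u|\le g(\tilde\nu)$ survives on the new boundary. The resulting $\tilde K$ would then belong to the class whose supremum defines $K^*$, contradicting maximality; so $K^*\in\F(\oo,g)$. The $C^1$ regularity of $\bd K^*$ follows from convexity together with the non degeneracy of $|Du^*|$ on $\bd K^*$ (via Hopf's lemma and the lower bound $c>0$ on $g$), and interior regularity of $u^*$ is the one in \eqref{Bresume}.

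The main obstacle is precisely the perturbation step. Unlike the constant gradient situation \eqref{Bresume}, where the target $\tau$ does not move when $K^*$ is deformed, here both sides of the inequality $|D\tilde u|\le g(\tilde\nu)$ change under the perturbation, because $g$ depends on the outer normal and the normal itself is altered by the perturbation. The construction of $\tilde K$ must therefore simultaneously preserve convexity, keep $\tilde K$ inside $\oo$, and balance the local change of $\nu$ on $S^{N-1}$ against the change in the modulus of the capacitary gradient. This delicate balance has to be achieved uniformly on the affected portion of $\bd K^*$, using the continuity of $g$, the strict gap $g(\nu)-|Du^*|>0$, and the $\p$-harmonic regularity theory of \cite{L}. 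Once this obstacle is overcome, the existence of a convex classical solution asserted in Theorem~\ref{gsub-sol} follows.
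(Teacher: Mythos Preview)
Your overall architecture---Perron's method over convex subsolutions, followed by a contradiction argument at a point where $|Du^*|<g(\nu)$---is exactly the one the paper uses. The two places where your write-up is genuinely incomplete are also the two places where the paper does real work.

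First, the claim that the convex hull of a subsolution is again a subsolution is not as soft as you suggest. It is not enough that ``the gradient can only decrease while the outer normal changes continuously'': at a point $y\in\bd(\conv K)\setminus\bd K$ you must compare $|Dv(y)|$ with $g(\nu_{\conv K}(y))$, and a priori you do not know the value of $g$ at this new normal. The paper handles this via the quasi-concave envelope $u^*$ of the subsolution(s): at such $y$ one has $y=\sum\la_ix_i$ with $x_i\in\bd K$ and, crucially, $\nu_K(x_i)=\nu_{\conv K}(y)$ for every $i$, together with the identity $|Du^*(y)|=\bigl(\sum\la_i/|Du(x_i)|\bigr)^{-1}$ from \cite{BLS}. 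Since all the $\nu_K(x_i)$ coincide with $\nu_{\conv K}(y)$, the bound $|Du(x_i)|\le g(\nu_K(x_i))=g(\nu_{\conv K}(y))$ passes through the harmonic mean. This is the content of Proposition~\ref{gcap}, and it is what your Hopf-lemma sketch is missing.

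Second, and more seriously, your perturbation ``convex hull of $K^*$ with a small ball'' is not the mechanism that closes the argument, and I do not see how to make it work directly. Adding a ball and convexifying produces new boundary with normals sweeping a whole cap of $S^{N-1}$, and you would have to control $|D\tilde u|$ against $g(\tilde\nu)$ over that entire cap, with no smallness available for the variation of $\tilde\nu$. The paper instead performs a blow-up at the bad point $y$: after rescaling, $u$ is $\varepsilon$-close to the linear function $\alpha x_1^+$ with $\alpha=|Du^*(y)|$, and one introduces an explicit $\p$-harmonic barrier $u_R$ solving $\Deltap u_R=0$ in $\{x_1<1\}\setminus B(x_R,R)$ (Lemma~\ref{HS4_211}). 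For $R$ large the free boundary of the associated competitor $\tilde w$ is a tiny piece of a sphere of radius $\sim R$, so its normal is uniformly close to $\nu(y)$; continuity of $g$ then gives $|D\tilde w|\le\alpha+2\delta<g(\nu(y))-\delta\le g(\nu_{\tilde K})$ on the new boundary. This is precisely the ``simultaneous control of the change of $\nu$ and of $|Du|$'' that you flag as the obstacle, and the blow-up together with the specific barrier $u_R$ is the missing idea.
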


The idea of a non constant boundary gradient condition has been developed in the literature by many authors, who considered the case of a space variable dependent constraint, $\aaa:\oo\to(0,+\infty)$.
We refer to \cite{AC},\cite{ACF},\cite{MW} for a functional approach, and to \cite{A2}, \cite{A3}, \cite{HS4} for the subsolution method.
In particular, an analogous result to Theorem \ref{gsub-sol} has been proved in \cite{HS4} where the authors considered 
a Bernoulli problem with non constant gradient boundary datum $\aaa(x)$.
For a given convex domain $\oo\subseteq \rn$, and a positive function $\aaa:\oo\to(0,\infty)$, such that 
$$
c\le\aaa(x)\le C, \text{ for every } x\in \oo,
$$
for some $C>c>0$, with
\begin{equation}\label{acvx}
\frac 1{\aaa} \text{ convex in }\oo,
\end{equation}
they consider the problem
\begin{equation}\label{Bintaaa}
\begin{cases}
\Deltap u(x)=0 \quad &\text{in }\oo\setminus\overline\ou,\\
u=0 \quad &\text{on }\bd\oo,\\
u=1, \quad &\text{on }\bd\ou,\\
|Du(x)|=\aaa(x) \quad &\text{on }\bd\ou.
\end{cases}
\end{equation}
and they proved that, if a subsolution to the problem exists, then there exists a classical solution and moreover the convexity of the given domain transfers to the free boundary.

Notice that in Problem (\ref{Bintaaa}) the function $\aaa$ is required to be given in the whole $\oo$, while in (\ref{Bint}) the function $g$ is defined only on the unit sphere $S^{N-1}$.
Moreover, while in Problem (\ref{Bintaaa}) the convexity property (\ref{acvx}) is required for the boundary constraint $\aaa$, in Problem (\ref{Bint}) no additional assumptions on $g$ are needed.

%

\section{Preliminaries}
\subsection{Notations}
In the $N$-dimensional Euclidean space, $N\geq 2$, we denote by $|\cdot|$ the Euclidean norm; for $K\subseteq\rn$, we denote by $\overline K$ its closure and by $\bd K$ its boundary, while $\conv(K)$ is its convex hull.  
$\Ha^m$ indicates the $m$-dimensional Hausdorff measure. 
We denote by $B(x_0,r)$ the ball in $\rn$ of center $x_0$ and radius $r>0$: $B(x_0,r)=\{x\in\rn\,:\,|x-x_0|<r \}$; in particular $B$ denotes the unit ball $B(0,1)$ and we set $\omega_N=\Ha^N(B)$. 
Let us define
$$
S^{N-1}=\bd B=\{x\in\rn\,:\,|x|= 1\};
$$ 
hence $\Ha^{N-1}(S^{N-1})=N\omega_N$.

We set
$$
\La_m=\{ \la=(\la_1,...,\la_m)\ |\ \la_i\ge0, \sum_{i=1}^m\la_i=1 \}.
$$

Given an open set $\Omega\subseteq \rn$, and a function $u$ of class $C^2(\Omega)$, $Du=(u_{x_1},\dots,u_{x_N})$ and $D^2u=(u_{x_ix_j})_{i,j=1}^N$ denote its gradient and its Hessian matrix respectively.

\subsection{Quasi-concave and $Q^2_-$ functions}
An upper semicontinuous function $u:\rn\to\R\cup\{\pm\infty\}$ is said \emph{quasi-concave} if it has convex superlevel sets, or, equivalently, if
$$
u\left( (1-\la)x_0+\la x_1 \right)\ge \min\{ u(x_0),u(x_1) \},
$$
for every $\la\in[0,1]$, and every $x_0, x_1\in\rn$.
If $u$ is defined only in a proper subset $\Omega$ of $\R^n$, we extend $u$ as $-\infty$ in $\R^n\setminus\Omega$ and we say that $u$ is
quasi-concave in $\Omega$ if such an extension is quasi-concave in $\rn$.
Obviously, if $u$ is concave then it is quasi-concave.

By definition a quasi-concave function determines a family of monotone decreasing convex sets; on the other hand, a continuous family of monotone decreasing convex sets, whose boundaries completely cover the first element, can be seen as the family of super-level sets of a quasi-concave function.

We use a local strengthened version of quasi-concavity, which was introduced and studied in \cite{LS}:
let $u$ be a function defined in an open set $\Omega\subset\R^n$; we say that  $u$ is a $Q^2_-$ function at a point $x\in \Omega$ (and we write $u\in Q^2_-(x)$) if:
\begin{enumerate}
\item $u$ is of class $C^2$ in a neighborhood of $x$;
\item its gradient does not vanish at $x$;
\item the principal curvatures of $\{ y\in\R^n\ |\ u(y)= u(x)\}$ with respect to the normal $-\frac{Du(x)}{|Du(x)|}$ are positive at $x$.
\end{enumerate}

In other words, a $C^2$ function $u$ is $Q^2_-$ at a regular point $\xx$ if its level set $\{x\,:\,u(x)=u(\xx)\}$ is a regular convex surface (oriented according to $-Du$), whose Gauss curvature does not vanish in a neighborhood of $\xx$.
By $u\in Q^2_-(\Omega)$ we mean $u\in Q^2_-(x)$ for every $x\in\Omega$.

\subsection{Quasi concave envelope}
If $u$ is an upper semicontinuous function, we denote by $u^*$ its \emph{quasi-concave envelope}. 
Roughly speaking, $u^*$ is the function whose superlevel sets are the closed convex hulls of the corresponding superlevel sets of $u$. 
It turns out that $u^*$ is also upper semicontinuos.

Let us indicate by $\ot{t}$ the superlevel set of $u$ of value $t$, i.e.
$$
\ot{t}=\{ x\in\rn\ |\ u(x)\ge t \},
$$
and let $\Omega^*(t)=\overline{\conv(\ot{t})}$. 
Then $u^*$ is the function defined by its superlevel sets in the following way:
$$
\Omega^*(t)=\{ x\in\rn\ |\ u^*(x)\ge t \} \qquad \text{ for every }t\in\R\,,
$$
that is
$$
u^*(x)=\sup \{ t\in\R\, |\, x\in\Omega^*(t) \}.
$$
Equivalently, as shown in \cite{CS}, 
\begin{eqnarray*}
 u^*(x)=\max\left\{ \min\{ u(x_1),...,u(x_{N+1}) \} \ :\ x_i\in\overline{\omk}, \exists \la\in\La_{N+1}, \ x=\sum_{i=1}^{N+1}\la_ix_i \right\}.
\end{eqnarray*}

Notice that $u^*$ is the smallest upper semicontinuous quasi-concave function greater than $u$, hence in particular $u^*\ge u$.
Moreover, if $u$ satisfies $\Deltap u=0$ in a convex ring $\omk$ (that is $\oo,K$ convex with $\overline{K}\subseteq\oo$), then it holds $\Deltap u^*\ge 0$ in $\omk$ in the viscosity sense (see for instance \cite{CS}).
\subsection{Subsolutions}\label{secsubsol}
In his pioneering work \cite{B}, Beurling introduced the notion of sub-solution for the classical Problem (\ref{Bintconstant}). 
This concept was further developed by Acker \cite{A2} and then generalized by Henrot and Shahgholian \cite{HS2}, \cite{HS4} to the case $\p>1$, both for constant and for non constant gradient boundary constraint.

Following the same idea, let us introduce the class of {sub-solutions} to the generalized Bernoulli Problem (\ref{Bint}).
Let $\oo$ be a subset of $\rn$; $\fm(\oo,g)$ is the class of functions $v$ that are Lipschitz continuous on $\overline{\oo}$ and such that
\begin{equation}\label{Asubsol}
\begin{cases}
\Deltap v \ge 0 &\qquad\text{in } \{ v<1\}\cap \oo\\
v =0           &\qquad\text{on }\bd \oo\\
|Dv(x)| \le g(\nu(x))  &\qquad\text{on }\bd\{v<1 \}\cap \oo\,.
\end{cases}
\end{equation}
If $v\in\fm(\Omega,g)$ we call it a {\em subsolution}.

As in the definition of solutions, we say that a set $K$ is a {\em subsolution}, and we possibly write $K\in\fm(\Omega,\tau)$ or $(v,K)\in\fm(\Omega,\tau)$, if $K=\{x\in\oo\,:\,v(x)\ge 1\}$ for some $v\in\fm(\oo,\tau)$.

In the standard case $g\equiv\tau$, for some positive constant $\tau$, it is known that the class of subsolutions and that of solutions are equivalent, indeed in \cite{HS2} is proved that, if $\oo$ is a $C^1$ convex domain, and $\fm(\oo,\tau)$ is not empty, then there exists a classical solution to (\ref{Bintconstant}).
In particular it is proved that 
$$
\widetilde{K}(\oo,\tau)= \bigcup_{C\in\fm(\oo,\tau)}C, \qquad \widetilde{u}=\sup_{v\in\fm(\oo,\tau)}v,
$$
solve Problem (\ref{Bintconstant}) and hence, recalling (\ref{Bresume}), it follows as a trivial consequence:
$$
\Lab{\p}(\oo)=\inf\{\tau\ :\ \F(\oo,\tau)\neq\emptyset\}=\inf\{\tau\ :\ \fm(\oo,\tau)\neq\emptyset\}.
$$ 

Regarding the proof of Theorem \ref{gsub-sol}, it is clear that an analogous relation between subsolutions and solutions hold true also in the non constant case, that is:
$$
\widetilde{K}(\oo,g)= \bigcup_{C\in\fm(\oo,g)}C, \qquad \widetilde{u}=\sup_{v\in\fm(\oo,g)}v,
$$
solve Problem (\ref{Bint}) and they are said \emph{maximal solution} to (\ref{Bint}).

\section{Proof of the main result}
In order to give a proof of Theorem \ref{gsub-sol}, some preliminary steps are needed; they are collected in the following propositions and lemmas.

\begin{proposition}\label{gcap}\label{gconvU}
Let $\oo$ be a regular $C^1$ convex subset of $\rn$; let $\ua,\ub\in\fm(\oo,g)$ with $K_0=\{\ua=1\}$ and $K_1=\{\ub=1\}$. 
Define $K=K_0\cup K_1$, $K^*=\overline{\conv{K}}$.
Then $v\in\fm(\oo,g)$, where $v$ is the $\p$-capacitary potential of $\oo\setminus K^*$. 	

Moreover 
$$
|Dv(x)| \le g(\nu_{\ot{t}}(y_x)),
$$	
for every $x\in\oo\setminus K^*$ and $y_x\in\bd K^*$ such that $\nu_{K^*}(y_x)= -Dv(x)/|Dv(x)| =\nu_{\ot{t}}(x)$, being $\ot{t}$ the superlevel set of $v$ of level $t=v(x)$.
\end{proposition}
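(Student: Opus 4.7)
The plan is to verify $v\in\fm(\oo,g)$ by checking the three defining conditions and then refine the gradient estimate to cover interior points as well. The first two conditions, $\Deltap v=0\ge 0$ in $\oo\setminus K^*$ and $v=0$ on $\bd\oo$, follow directly from the definition of the $\p$-capacitary potential, so the real content is the gradient bound $|Dv|\le g(\nu_{K^*})$ on $\bd K^*$ (and its level-set version stated in the ``moreover'' clause).

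To obtain the gradient estimate, I would construct a comparison function via the quasi-concave envelope. Let $w:=\max(\ua,\ub)$, extended by $1$ on $K=\oua\cup\oub$ and by $0$ outside $\oo$, and set $\um:=w^*$. Then
\[
\{\um\ge 1\}\ =\ \overline{\conv(\oua\cup\oub)}\ =\ K^*, \qquad \um=0 \text{ on } \bd\oo,
\]
so $\um$ agrees with $v$ on the boundary of $\oo\setminus K^*$. Since $w$ is $\p$-subharmonic as a maximum of $\p$-subharmonic functions and the quasi-concave envelope preserves $\p$-subharmonicity in the viscosity sense (following the property recalled in Section~\ref{secsubsol} and \cite{CS}), we have $\Deltap \um\ge 0$ in $\oo\setminus K^*$; the comparison principle then yields $\um\le v$ in $\oo\setminus K^*$, with equality on the two boundary components.

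The gradient inequality at $x_0\in\bd K^*$ now follows from the convex-hull structure. By Carath\'eodory write $x_0=\sum_i \lambda_i y_i$ with $(\lambda_i)\in\Lambda_{N+1}$ and $y_i\in\bd \oua\cup\bd \oub$; since $x_0$ lies on the supporting hyperplane of $K^*$ with outward normal $\mw:=\nu_{K^*}(x_0)$, each $y_i$ sits on that same hyperplane, so $\mw$ is also the outward normal at $y_i$ to the set $K_{j(i)}$ containing it. Translating in the direction $\mw$ preserves the convex combination, and the subsolution bound $|D\uti(y_i)|\le g(\mw)$ for every $i$ yields
\[
\um(x_0+t\mw)\ \ge\ \min_i u_{j(i)}(y_i+t\mw)\ \ge\ 1-t\,g(\mw)+o(t).
\]
Combined with $v\ge\um$ and the one-sided expansion $v(x_0+t\mw)=1-t|Dv(x_0)|+o(t)$ (valid since $Dv(x_0)$ is antiparallel to $\mw$), dividing by $t>0$ and letting $t\to 0^+$ gives $|Dv(x_0)|\le g(\mw)$, which is the desired subsolution inequality.

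For the ``moreover'' clause at interior $x\in\oo\setminus K^*$, I would exploit the classical convexity of level sets of $\p$-capacitary potentials on convex rings (Lewis/Gabriel) so that $\ot{t}$ with $t=v(x)$ is convex, pick $y_x\in\bd K^*$ with $\nu_{K^*}(y_x)=\mw:=-Dv(x)/|Dv(x)|$ (which exists by convexity of $K^*$), and then rerun the convex-combination argument at the level $t$ using the rescaled potential $v/t$, which by uniqueness is the $\p$-capacitary potential of the smaller convex ring $\oo\setminus\ot{t}$ and inherits the same quasi-concave-envelope comparison. The main obstacle I anticipate is establishing the $\p$-subharmonicity of $\um$ when $w$ is only a maximum of subsolutions rather than a single $\p$-harmonic function: this likely needs a separate viscosity-test argument or an approximation by true $\p$-harmonic replacements of $u_0,u_1$ on their respective rings. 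A secondary delicate point is the regularity needed to justify the first-order expansions of the Lipschitz subsolutions $u_i$ at $y_i\in\bd K_i$, which requires reading the bound $|D u_i|\le g(\nu)$ in an appropriate one-sided sense along $\mw$.
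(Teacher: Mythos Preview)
Your overall scheme---compare $v$ with the quasi-concave envelope $\um=(\max(u_0,u_1))^*$ via the viscosity comparison principle, then read off the gradient bound on $\bd K^*$---is exactly the paper's. The paper, however, does not obtain $|Dv|\le g(\nu_{K^*})$ on $\bd K^*$ through the one-sided Taylor expansion you propose; instead it splits $\bd K^*$ into $\bd K^*\cap\bd K$ (where $|Du^*|\le |Du_i|\le g(\nu)$ directly) and $\bd K^*\setminus\bd K$, and on the latter invokes a structural result from \cite{CS} producing points $x_k\in\bd K_0\cup\bd K_1$ that share the normal of $y$, together with the explicit harmonic-mean formula $|Du^*(y)|=\bigl(\sum_k\lambda_k/|Du_{i_k}(x_k)|\bigr)^{-1}$ from \cite{BLS}. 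Since every $x_k$ has the common outward normal $\nu$ and $|Du_{i_k}(x_k)|\le g(\nu)$, the harmonic mean is at most $g(\nu)$ immediately. Your expansion argument is a legitimate and more elementary alternative, but it leans on exactly the first-order regularity of the Lipschitz subsolutions that you yourself flag; the cited formula packages that issue into an external lemma. Your worry about $\Deltap\um\ge 0$ for the envelope of a \emph{maximum} of subsolutions is shared by the paper, which simply appeals to \cite{CS}.

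The ``moreover'' clause is where your sketch has a real gap. Observing that $v/t$ is the $\p$-capacitary potential of $\oo\setminus\ot{t}$ is correct, but there is no evident pair of subsolutions for that new ring against which the convex-combination argument could be ``rerun'', so the mechanism you describe does not actually deliver the interior bound; rescaling only gives $|D(v/t)|=|Dv|/t$, which points the wrong way. The paper proceeds differently and directly: once $|Dv|\le g(\nu_{K^*})$ is established on $\bd K^*$, Lewis's theorem gives $v\in\qm(\oo\setminus K^*)$, and then Lemma~4.1 of \cite{BS} (a monotonicity of $|Dv|$ along matching normals in a convex ring) yields $|Dv(x)|\le |Dv(y_x)|\le g(\nu_{K^*}(y_x))$ for the point $y_x\in\bd K^*$ with $\nu_{K^*}(y_x)=\nu_{\ot{t}}(x)$. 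Replace the rescaling idea with this monotonicity step.
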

\begin{proof}
Let $u^*$ be the quasi-concave envelope of $u=\max\{\ua,\ub\}$; it satisfies in the viscosity sense
$$
\begin{cases}
\Deltap u^* \ge 0 &\qquad\text{ in }\oo\setminus K^*\\
u^*=0            &\qquad\text{ on }\bd\oo\\	
u^*=1            &\qquad\text{ on }\bd K^*,
\end{cases}
$$
and hence, by the viscosity comparison principle, 
\begin{equation}\label{vu*}
|Dv|\le |Du^*|\text{ on }\bd K^*.
\end{equation}
Consider $y\in\bd K^*$; then either $y\in \bd K^*\cap\bd K$ or $y\in\bd K^*\setminus \bd K$.

Assume $y\in\bd K^*\cap\bd K$, so that $\nu_{K}(y)=\nu_{K^*}(y)$.
Then either $y\in\bd K_0$, or $y\in\bd K_1$ and hence $|Du^*(y)|\le|D\ua(y)|$ or $|D\ub(y)|$; however in both the cases
$$
|Dv(y)|\le|Du_i(y)|\le g\big(\nu_{K}(y)\big)=g\big(\nu_{K^*}(y)\big),
$$
as $\ua,\ub\in\fm(\oo,g)$.

Now assume $y\in\bd K^*\setminus\bd K$.
By  Proposition 3.1 in \cite{CS}  there exist $x_1,...,x_N\in \bd(K_0\cup K_1)$ such that $x_1,...,x_l\in\bd K_0$, $x_{l+1},...,x_N\in\bd K_1$ (with $0\le l\le N$) and $\la\in\La_N$ such that
$$
\nu_{K_0}(x_i)=\nu_K(x_i)\text{ parallel to }\nu_{K_1}(x_j)=\nu_K(x_j)\text{ parallel to }\nu_{K^*}(y), 
$$ 	
for $i=1,...,l$, $j=l,...,N$ and $y=\sum_{i=1}^N\la_ix_i$. 
Moreover thanks to Proposition 2.2 in \cite{BLS} it holds
$$
|Du^*(y)|=\left( \sum_{k=1}^N \frac{\la_k}{|Du_{i_k}(x_k)|} \right)^{-1}\le \left( \sum_{k=1}^N \frac{\la_k}{g(\nu_{K_{i_k}}(x_k))} \right)^{-1} =\left( \sum_{k=1}^N \frac{\la_k}{g(\nu(x))} \right)=g(\nu(x)),
$$
where $i_k\in\{0,1\}$.
Hence, by (\ref{vu*}), $v\in\fm(\oo,g)$.

Notice that, as $\oo,K^*$ are convex, the function $v$ is quasi-concave, in particular, thanks Lewis's result \cite{L}, $v\in \qm(\oo\setminus K^*)$. 
For every $x\in\oo\setminus K^*$, let $\nu_{\ot{t}}(x)$ be the outer unit normal vector to the superlevel set $\{v(y)\ge v(x)\}$; hence by Lemma 4.1 in \cite{BS}, it holds
$$
|Dv(x)|\le g(\nu_{K^*}(y_x)),
$$
where $y_x\in\bd K^*$ is such that $\nu_{K^*}(y_x)=\nu_{\ot{t}}(x)$. 
\end{proof}

For the sake of completeness we rewrite here two lemmas in \cite{HS4} which are particularly useful in the proof of Theorem \ref{gsub-sol}. 
\begin{lemma}[\cite{HS4}]\label{HS4_211}
Let $D_R=\{x_1<1\}\setminus B_R$, where $B_R=B(x_R,R)$ and $x_R=(-R,0,\dots,0)$. 
Assume $l>0$ and let $u_R$ solve
$$
\begin{cases}
\Deltap u=0 &\qquad\text{ in }D_R\\
u=l       &\qquad\text{ on }\{x_1=1\}\\
u=0        &\qquad\text{ on }\bd B_R.	
\end{cases}
$$
Then for any $\ep>0$ there exists $R$ sufficiently large such that $|Du_R|\le l+\ep$ on $\bd B_R$.
\end{lemma}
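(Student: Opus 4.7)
My strategy is to compare $u_R$ with a radial $\p$-harmonic barrier $w$ centred at $x_R$ that dominates $u_R$ on $\bd D_R$ and whose normal derivative on $\bd B_R$ converges to $l$ as $R\to\infty$. Let $\Phi:[R,\infty)\to[0,\infty)$ denote the $\p$-harmonic radial profile normalised so that $\Phi(R)=0$ and $\Phi'>0$, namely
$$
\Phi(r)=\begin{cases} 1-(R/r)^{(N-\p)/(\p-1)} & \text{if }\p<N,\\ \log(r/R) & \text{if }\p=N,\\ (r/R)^{(\p-N)/(\p-1)}-1 & \text{if }\p>N,\end{cases}
$$
and set $w(x):=A\,\Phi(|x-x_R|)$ with $A:=l/\Phi(R+1)$. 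A direct computation shows $\Deltap w=0$ in $\{|x-x_R|>R\}\supset D_R$; moreover $w=0$ on $\bd B_R$ and, since the distance from $x_R$ to $\{x_1=1\}$ equals $R+1$ (attained only at $(1,0,\dots,0)$) and $\Phi$ is strictly increasing, $w\ge l$ on $\{x_1=1\}$.

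Next I would invoke the comparison principle for $\Deltap$ to derive $u_R\le w$ in $D_R$. Since $D_R$ is unbounded, I would argue on the truncated domains $D_R\cap B_M(0)$: the maximum principle gives $0\le u_R\le l$, while $w$ is either eventually $\ge l$ (for $\p<N$ one has $w\to A>l$ as $r\to\infty$) or $w\to +\infty$ (for $\p\ge N$), so $w\ge u_R$ on the artificial boundary $\bd B_M\cap D_R$ once $M$ is large; letting $M\to\infty$ yields $u_R\le w$ globally in $D_R$. Standard $\p$-harmonic regularity makes both $u_R$ and $w$ of class $C^1$ up to $\bd B_R$ (near which $Dw\ne 0$, so $\Deltap$ is non-degenerate), and the Hopf-type inequality applied to $w-u_R\ge 0$, which vanishes on $\bd B_R$, gives
$$
|Du_R(x)|\le|Dw(x)|=A\,\Phi'(R)=l\,\frac{\Phi'(R)}{\Phi(R+1)}\qquad\text{on }\bd B_R.
$$

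It remains to show the right-hand side tends to $l$ as $R\to\infty$. By the mean value theorem, $\Phi(R+1)=\Phi'(\xi_R)$ for some $\xi_R\in(R,R+1)$, so the ratio becomes $\Phi'(R)/\Phi'(\xi_R)$; in each of the three cases $\Phi'$ is a pure power of $r$ or $1/r$, and $\xi_R/R\to 1$, so this ratio converges to $1$. Hence, given $\ep>0$, one may choose $R$ large enough that $|Du_R|\le l+\ep$ on $\bd B_R$. The main obstacle lies in designing the barrier so that its normal derivative on $\bd B_R$ tracks the correct linear-profile slope $l$ in the limit; the explicit identity above, combined with the mean value observation, encodes precisely this. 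It reflects the heuristic that, on the scale of unit distances near the contact point between $\bd B_R$ and $\{x_1=0\}$, the barrier becomes asymptotically affine with slope $l$, matching the linear profile $l\,x_1$ that solves the flat-plate limit problem on $\{0<x_1<1\}$.
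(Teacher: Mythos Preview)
The paper does not supply its own proof of this lemma; it is quoted verbatim from \cite{HS4} without argument, so there is nothing in the present paper to compare your proof against. That said, your radial-barrier argument is correct and is essentially the standard route to this estimate: the explicit $\p$-harmonic profile $w(x)=A\,\Phi(|x-x_R|)$ with $A=l/\Phi(R+1)$ dominates $u_R$ on $\bd D_R$, the truncation-and-limit device handles the unboundedness of $D_R$, and the comparison of normal derivatives on $\bd B_R$ together with the mean-value identity $\Phi(R+1)=\Phi'(\xi_R)$ gives exactly $|Du_R|\le l\,\Phi'(R)/\Phi'(\xi_R)\to l$. The only point that deserves a word of caution is your appeal to ``the maximum principle gives $0\le u_R\le l$'': on the unbounded domain $D_R$ this bound is really part of the \emph{specification} of $u_R$ (e.g.\ as the Perron solution, or as the increasing limit of solutions on $D_R\cap B_M$ with zero data on the artificial boundary) rather than an a posteriori consequence, since without some control at infinity the comparison with the constant $l$ is circular. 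Once $0\le u_R\le l$ is granted, the rest of your argument is complete.
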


\begin{lemma}[\cite{HS4}]\label{lemmablowup}
Let $u$ be the $\p$-capacitary potential of the convex ring $\omk$, with $|Du|\le C$ uniformly in $\omk$.
Then any converging blow-up sequence
$$
u_{r_j}(x)=\frac 1{r_j}\ \left(1-u(r_j x)\right),
$$
at any boundary points gives a linear function $u_0=\alpha x_1^+$, after suitable rotation and translation, where $\alpha=|Du(O)|$ and $O$ indicates the origin.
\end{lemma}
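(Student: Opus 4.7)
The plan is to extract a blow-up limit from the equi-Lipschitz family $\{u_{r_j}\}$, to identify it as a $\p$-harmonic function outside the tangent cone to $\ou$ at the blow-up point $O$, and then to conclude via a Liouville-type rigidity argument based on the convexity of $\ou$.

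First I would observe that $|Du_{r_j}(x)|=|Du(r_j x)|\le C$ uniformly on the rescaled domain, and that $u_{r_j}(0)=0$ after translating the blow-up point to the origin: since the blow-up point must satisfy $u(O)=1$ (otherwise $u_{r_j}(0)\to\infty$), it necessarily lies on $\bd\ou$. The family is therefore equi-Lipschitz with a common value at the origin, and by Arzel\`a--Ascoli a subsequence converges locally uniformly on $\rn$ to a limit $u_0$ satisfying $|u_0(x)|\le C|x|$.

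Next I would check that the rescaling preserves the equation: a direct computation using the homogeneity of $\Deltap$ under $u\mapsto \alpha u+\beta$ and the scaling $u\mapsto u(r\cdot)$ shows that $u_{r_j}$ is $\p$-harmonic on $(\omk)/r_j$ and identically zero on $\ou/r_j$. Since $O$ is interior to $\oo$, the set $\oo/r_j$ exhausts $\rn$, while the convex sets $\ou/r_j$ converge in the Hausdorff sense on compacta to the tangent cone $T_O\ou$, which is convex. Standard stability of $\p$-harmonic functions under uniform convergence then gives $\Deltap u_0=0$ on $\rn\setminus T_O\ou$, $u_0\ge 0$, and $u_0\equiv 0$ on $T_O\ou$.

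The decisive step is the rigidity. A supporting hyperplane of $\ou$ at $O$ yields, after a suitable rotation, $T_O\ou\subseteq\{x_1\le 0\}$, so $u_0$ is a non-negative $\p$-harmonic function on $\{x_1>0\}$ with linear growth and vanishing boundary data on $\{x_1=0\}$. A Liouville / Phragm\'en--Lindel\"of-type theorem for the $\p$-Laplacian in a half-space (applied as in \cite{HS4}) then forces $u_0(x)=\alpha x_1^+$ for some $\alpha\ge 0$, and in particular pins down $T_O\ou=\{x_1\le 0\}$. The identification $\alpha=|Du(O)|$ follows from the $C^{1,\alpha}$ regularity of $u$ up to $\bd\ou$ established by Lewis \cite{L}: along sequences in $\{x_1>0\}$ one has $|Du_{r_j}(x)|=|Du(r_j x)|\to|Du(O)|$, to be matched against $|Du_0|\equiv\alpha$ on the same half-space. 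I expect the Liouville step to be the main technical obstacle, since for $\p\ne 2$ classical reflection is unavailable and one must rely either on comparison with one-dimensional $\p$-harmonic barriers combined with a boundary Harnack / scaling argument, or on invoking a dedicated rigidity result from the $\p$-capacitary literature.
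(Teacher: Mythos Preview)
The paper does not prove this lemma at all: it merely restates it, with attribution to \cite{HS4}, as one of two auxiliary results borrowed verbatim (``for the sake of completeness we rewrite here two lemmas in \cite{HS4}''). So there is no in-paper proof to compare against; your outline is to be measured against the original argument of Henrot--Shahgholian, and it matches that argument in structure: compactness of the equi-Lipschitz rescalings, stability of $\p$-harmonicity under the blow-up, identification of the zero set with the tangent cone of $\ou$, and a half-space Liouville/Phragm\'en--Lindel\"of step to force linearity.

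One point deserves tightening. You pass from ``$T_O\ou\subseteq\{x_1\le 0\}$'' to ``$u_0$ vanishes on $\{x_1=0\}$'' in order to invoke the half-space Liouville theorem, but a priori $u_0$ is only known to vanish on $T_O\ou$, not on the full hyperplane. If $\bd\ou$ has a genuine corner at $O$, the tangent cone is strictly smaller than a half-space and the boundary datum on $\{x_1=0\}$ is not immediately zero. In \cite{HS4} this is handled either by first observing that the convexity of $\ou$ together with the uniform gradient bound forces the blow-up limit of $\ou/r_j$ to be a half-space (so $\bd\ou$ is in fact $C^1$ at $O$), or by using a Liouville-type classification directly on the complement of a convex cone. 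Your sketch implicitly assumes the first route when you say the Liouville step ``pins down $T_O\ou=\{x_1\le 0\}$''; just make explicit which of the two mechanisms you are invoking, since the half-space Liouville alone does not close the loop without knowing the boundary values on all of $\{x_1=0\}$.
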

%
%
%

Following the idea of the proof of Theorem 1.2 in \cite{HS4}, now we present the proof of Theorem \ref{gsub-sol}.
\begin{proof}[Proof of Theorem \ref{gsub-sol}.]
Let us consider $u=\sup\{v\ : v\in\fm(\oo,g)\}$, and let $u_n$ be a maximizing sequence.
Notice that, thanks to Proposition \ref{gconvU}, we can assume $\{u_n\}$ to be an increasing sequence of the $\p$-capacitary potentials of convex rings $\oo\setminus\overline{K_n}$, with $|Du_n(x)|\le g(\nu_{K_n}(x))$ on $\bd K_n$ for every $n$.
Let $K$ be the increasing limit of $K_n$; hence $K$ is convex and, as uniform limit of $\p$-harmonic functions, $u$ is the $\p$-capacitary potential of $\omk$, with $|Du(x)|\le g(\nu_K(x))$ on $\bd K$.

We need to show that in fact $|Du(x)|=g(\nu_K(x))$ and we will prove it by contradiction, constructing a function $w\in\fm(\oo,g)$ such that $w\ge u$ with $w>u$ at some point.
Let us remind that $\nu(x)$ indicates the outer unit normal vector to $\bd K$ at $x$.

Let us assume by contradiction that there exists a point $y\in\bd K$ such that 
$$
\alpha=|Du(y)|<g(\nu(y))
$$ 
and assume $y$ to be the origin $O$ with outer unit normal $\nu$ parallel to the first axis. 
Let $\delta$ be such that 
\begin{equation}\label{alpha}
\alpha + 3\delta < g(\nu).
\end{equation}
By Lemma \ref{lemmablowup} the sequence 
$$
\urj=\frac 1{r_j}\left(1-u(r_jx)\right),
$$ 
converges to $\ua(x)=\alpha x_1^+$, hence for every $\eta>0$,
\begin{equation}\label{u>}
u(x)>1-\alpha x_1^+ -\eta r_j,
\end{equation}
if $r_j$ is small enough, for $x=(x_1,...,x_N)\in B(O,r_j)$.

Consider 
$$
w_R(x)=w_{R,\ep}(x)= \left(\alpha+\frac{\delta}2\right) \left( \frac{u_R-\ep}{\alpha+\delta/2-\ep} \right)^+,
$$
where $u_R$ is as in Lemma \ref{HS4_211} and $l=\alpha+\delta/2$.
Then there exist $\ep_0,R_0>0$ such that for $\ep\le\ep_0$ and $R\ge R_0$, 
\begin{equation}\label{DwR}
|Dw_R|\le \alpha+2\delta,\text{ on }\bd\{u_R\le\ep\}=\{w_R=0\}.
\end{equation} 
Moreover there exist $\delta_1,\delta_2>0$ such that 
$$
w_R>\alpha x_1^+ +\delta_2\quad\text{ on }\bd B(O,1)\cap\{x_1>-\delta_1\},
$$
in particular we can fix $\delta_1$ small enough such that $\{u_R=\ep\}\cap\bd B(0,1)\subseteq\{x_1>-2\delta_1\}$, and choose 
$$
0<\delta_2=2\inf\{u_R(x)-\alpha {x_1}^+\ :\ x\in \bd B(O,1)\cap\{x_1>-\delta_1\}\}.
$$
Let $\tilde{w}(x)=1-r_j w_R(x/r_j)$; notice that, as $u_R$ is quasi-convex, then $\tilde w$ is quasi-concave.
Moreover for $r_j$ sufficiently small, recalling (\ref{u>}) it holds
$$
\tilde{w}< 1- \alpha x_1^+ -\delta_2 r_j<u\quad\text{ on }\bd B(O,r_j).
$$

Define
$$
w(x)=
\begin{cases}
\max\{u(x),\tilde{w}(x)\}&\qquad\text{ in }B(O,r_j),\\	
u(x)                     &\qquad\text{ in }\rn\setminus B(O,r_j),
\end{cases}
$$
and $W=\{\tilde{w}=1\}=r_j\{w_R=1\}$; observe that on $\bd B(O,r_j)$, $w=\tilde{w}$. 
By (\ref{alpha}) and (\ref{DwR}), for every $x\in W$ it holds
$$
|D\tilde{w}(x)|\le \alpha +2\delta < g(\nu)-\delta.
$$ 

Notice that $\{u_R=0\}=\bd B(x_R,R)$ and for every $x\in\bd\{u_R=0\}$ it holds 
$$
\lim_{R\to\infty}\nu_{B_R}(x)=\nu=(1,0,...,0).
$$
Moreover $\lim_{\ep\to 0}\{u_R=\ep\}=\{u_R=0\}$ as limit in the \hau metric of {convex sets}.
Hence, by continuity, for sufficiently large $R$ and sufficiently small $\ep$, we have 
$$
|g(\nu)-g(\nu_{W}(z))|\le \delta,
$$ 
for every $z\in W\cap B(O,r_j)$, and hence,
$$
|D\tilde{w}(x)|< g(\nu)-\delta \le g(\nu_W(x)),
$$
for every $x\in W\cap B(o,r_J)$.

Then $w\in\fm(\oo,g)$ and, since $w>u$ at some points, we get a contradiction with the maximality of $u$.
Therefore $|Du|=g(\nu)$ on $\bd K$.
\end{proof}

\section{Final remarks}
\begin{remark}
{\rm  
Notice that in the non constant case no characterization of functions $g$ for which $\fm(\oo,g)$ is not empty are known.
However in some trivial case the existence or non-existence of a solution can be easily deduced by the characterization of the existence for the standard problem in (\ref{Bresume}). 
Indeed if $g$ satisfies 
$$
\min_{\nu\in S^{N-1}} g(\nu)\ge\Lab{\p}(\oo),\qquad\text{ then } \quad\F(\oo,\Lab{\p}(\oo))\subseteq\fm(\oo,g),
$$
and hence $\fm(\oo,g)\neq\emptyset$; on the other hand, if 
$$
M=\max_{\nu\in S^{N-1}} g(\nu)<\Lab{\p}(\oo),\qquad\text{ then }\quad\fm(\oo,g)\subseteq\fm(\oo,M)=\emptyset,
$$ 
and hence problem (\ref{Bint}) has no solutions. 
}
\end{remark}

\begin{remark}[Concavity property of Bernoulli Problems (\ref{Bint})]
{\rm
As in the classical case, geometric properties for the maximal solutions to (\ref{Bint}) can be proved.
Indeed, following the argument in \cite{BS}, it is possible to define a combination of the Bernoulli Problems (\ref{Bint}) in the \mink sense and to prove that Problem (\ref{Bint}) has a concave behaviour with respect to this combination. 
More precisely: fix $\la\in[0,1]$; let $\oo_0,\oo_1$ be two given convex domains  and $g_0,g_1:S^{N-1}\to\R^+$ two continuous functions (which both stay far away from zero). 
We define $\oo_\la$ as the \mink combination of $\oo_0,\oo_1$, that is 
$$
\oo_\la=(1-\la)\oo_0+\la\oo_1=\{z=(1-\la)k_0+\la k_1 \ |\ k_0\in K_0,k_1\in K_1 \},
$$ 
and $g_\la$ as the harmonic mean of $g_0$ and $g_1$, that is 
$$
\frac 1{g_\la(\nu)}=\frac{(1-\la)}{g_0(\nu)}+\frac{\la}{g_1(\nu)}.
$$ 
Consider Problem (\ref{Bint}) for $\oo_0,g_0$ and $\oo_1,g_1$, respectively; we define their \emph{combined problem} of ratio $\la$ the Bernoulli problem of the type (\ref{Bint}), with given set $\oo_\la$ and gradient boundary constraint $g_\la(\nu)$.
Following the proof of Proposition 7.1 in \cite{BS} we can prove that if $\fm(\oo_i,g_i)$, $i=0,1,$ are non empty sets, then  so is $\fm(\oo_\la,g_\la)$.
More precisely let $(\widetilde{K}(\oo_i,g_i),u_i)$ be the maximal solutions, for $i=0,1$ and let $\ula$ be the \mink combination of $u_0$ and $u_1$ of ratio $\la$, that is 
$$
\{\ula\ge t\}=(1-\la)\{u_0\ge t\}+\la\{u_1\ge t\};
$$
(see for instance \cite{BS} for more detailed definitions and properties). 
The function $\ula$ belongs to $\fm(\oo_\la,g_\la)$ and hence, by Theorem \ref{gsub-sol}, Problem (\ref{Bint}) for $\oo_\la$ and $g_\la$ admits a solution $(\widetilde K(\oo_\la,g_\la),\tilde u_\la)$ which satisfies
$$
(1-\la)\widetilde K(\oo_0,g_0(\nu))+\la\widetilde K(\oo_1,g_1(\nu))\subseteq \widetilde K(\oo_\la, g_\la).
$$
}
\end{remark}

\begin{remark}[{A flop in the unbounded case}]
{\rm
It could be natural to try to extend Theorem \ref{gsub-sol} to the unbounded case with an approximation method considering a sequence of given domains $\oo_R=\oo\cap B(O,R)$ as $R$ grows. 
As the sequence $\{\oo_R\}$ is monotone increasing by comparison principle $\tilde{K}(\oo_R,g)$ also increases and hence it converges to a convex set. 
Unfortunately, this approach fails in the limit process as it turns out that in fact $\oo_R$ converges to the given set $\oo$ which means that the limit of maximal solutions degenerates.

More precisely assume for simplicity $\oo=\rn$, so that $\oo_R=B_R=B(O,R)$ (or, analogously $\oo=H^-$ the half space $\{ x_N\le 0 \}$ and take $\oo_R=B_R=B(x_R,R)$, where $x_R=(0,...,0,-R)$).
If $R$ is sufficiently large, then the Bernoulli constant of $B_R$, $\Lab{\p}(B_R)=C_N/R$ (see \cite{BS} for example) is smaller than $c_0$ and hence 
$$
B_r\subseteq \widetilde{K}(B_R,c_0)\subseteq\widetilde{K}(B_R,g(\nu)),
$$
where $B_r=B(O,r)$ is the unique solution to Problem (\ref{Bint}) corresponding to $\oo=B_R$ and $g(\nu)\equiv\Lab{\p}(B_R)$.

Hence for sufficiently large $R$, $\fm(B_R,g)$ is not empty and Theorem \ref{gsub-sol} gives a sequence of quasi-concave $\p$-capacitary potentials $\{u^R\}$ which solve Problem (\ref{Bint}) in $\oo_R\setminus\overline{K_R}$, where $K_R=B_r$. 
By easy computations one can check that $r=R/c_N$, for some constant $c_N$ depending on the dimension and hence, the sequence of interior domains $\{K_R\}_{R>0}$ is not bounded for $R$ which tends to infinity.
This implies that the limit of the maximal solutions $(K_R,u^R)$ is not the solution to the limit problem. 
}
\end{remark}
\subsection*{Acknowledgements}
The author wishes to warmly thank Paolo Salani for his invaluable help, the several helpful discussions and, above all, for his encouragement and support.


\end{document}